%% file: degenerate-arXiv.tex
\let\nofiles\relax

\documentclass[twoside]{article}

\usepackage{subfigure}
\usepackage{mathrsfs}
\usepackage[sumlimits]{amsmath}
\usepackage{amsthm,amssymb,amsfonts}
\usepackage{graphicx,color}
\usepackage{algorithm}
\usepackage{algpseudocode}
\usepackage{titlesec}
\usepackage{float}
\usepackage{caption}
\usepackage{extarrows}

\usepackage[
linktocpage=true,
colorlinks=true,
linkcolor=blue,
citecolor=blue,
urlcolor=blue
]{hyperref}
\usepackage[numbers,sort&compress]{natbib}

\allowdisplaybreaks

\input{pagefmt.tex}

\newtheorem{definition}{Definition}[section]

\newtheorem{lemma}[definition]{Lemma}
\newtheorem{theorem}[definition]{Theorem}

\newtheorem*{remark*}{Remark}

\newtheoremstyle{noparens}%
{}{}%
{\itshape}{}%
{\bfseries}{\bf .}%
{ }%
{\thmname{#1}\thmnumber{ #2}\mdseries\thmnote{#3}}

\theoremstyle{noparens}

\theoremstyle{definition}
\newtheorem{remark}[definition]{Remark}

\numberwithin{equation}{section}
\numberwithin{figure}{section}

\titleformat*{\section}{\large \bfseries}
\titleformat*{\subsection}{\bfseries}
\titleformat*{\subsubsection}{\normalsize \bfseries}

\begin{document}
%
%
%
%



\title{The Stability of a Coupled Degenerate Wave System Under Boundary Control}%

\author{SUN Ya-nan, ZHANG Qiong}


\vskip10pt


\vskip-10pt

\begin{abstract}
	In this paper, we investigate a system composed of two degenerate wave equations which are connected at one point. By introducing some inequalities on the weighted spaces and employing the frequency domain method, we prove that the system is polynomially stable, which depends on the degree of the degeneracy.
\end{abstract}

\keywords{$C_0$-semigroup,  degenerate wave equation,   boundary control, polynomially stable}%

\mr{ 35Q74, 35Q93, 93D20 }
\section{Introduction}
In this paper, we investigate the stability of a system consisting of two coupled degenerate wave equations. Specifically, one string is defined on the interval $(0,1)$ and degenerates at the point $x = 0$, while the other string is defined on the interval $(-1, 0)$ and degenerates at the point $x=-1$. These two strings are connected at the common point $x = 0$, and boundary control is only applied at the free endpoint of the right string (i.e., $x=1$). The specific mathematical description is given as follows,
\begin{equation}\label{sys}
	\begin{cases}
		u_{tt} - (a(x)u_x)_x = 0,&(x,t)\in(0,1)\times(0,\infty),\\
		w_{tt} - (b(x)w_x)_x = 0, &(x,t)\in(-1,0)\times(0,\infty),\\
		u(0,t) = w(0,t),&t\in(0,\infty),\\
		\lim_{x\to 0^+}(au_x)(x,t) = (bw_x)(0,t),&t\in(0,\infty),\\
		\gamma u(1,t) + u_t(1,t) + u_x(1,t) = 0,& t\in(0,\infty),\\
		w(-1,t) = 0,&t\in(0,\infty), \; \mbox{if}\; \mu_b<1,\\
		\lim_{x\rightarrow -1^+} (bw_x)(x,t) = 0,&t\in(0,\infty),\; \mbox{if}\; \mu_b\geq 1,\\
		u(x,0) = u_0(x),\;\;u_t(x,0) = u_1(x),&x\in(0,1),\\
		w(x,0) = w_0(x),\;\; w_t(x,0) = w_1(x),&x\in(-1,0),
	\end{cases}
\end{equation}
where $\gamma>0$, $a(x) = x^{\mu_a}$ (with $0\leq\mu_a<1$) and $b(x)=(x+1)^{\mu_b}$ (with $0\leq\mu_b<2$). The energy of \eqref{sys} is defined as
\begin{equation*}
	\begin{split}
		E(t) = \frac{1}{2}
		\left(\|u_t\|_{L^2(0,1)}^2
		+ \|\sqrt{a}u_x\|_{L^2(0,1)}^2+ \|w_t\|_{L^2(-1,0)}^2  + \|\sqrt{b}w_x\|_{L^2(-1,0)}^2 +\gamma a(1)|u(1)|^2\right),
	\end{split}
\end{equation*}
and
$$\frac{d}{dt}E(t) = - a(1)|u_t(1)|^2.$$
Recently, the degenerate wave equations have received much attention.  A common feature of these systems is that they can be naturally linked to differential operators with variable diffusion coefficients. While these operators are not uniformly elliptic, even though they are in general uniformly elliptic in compact subsets of the space domain, provided that these subsets are at a positive distance from the so-called zone of degeneracy. This degeneracy zone may occur either on a part of the boundary or on a sub-manifold of the space domain.

Alabau-Boussouira, Cannarsa and Leugering \cite{Alabau} studied controllability and observability issues for degenerate wave equations of the form
\begin{equation}\label{example}
	u_{tt}-(\alpha(x)u_x)_x = 0,\;\;(x,t)\in(0,1)\times(0,\infty),
\end{equation}
where $\alpha$ is positive on $(0,1)$ but vanishes at zero. The degeneracy of \eqref{example} at $x=0$ is measured by the parameter $\mu$ defined by
\begin{equation*}
	\mu = \sup_{0< x \leq 1}\frac{x|\alpha'(x)|}{\alpha(x)}.
\end{equation*}
One says that \eqref{example} degenerates weakly if $0\leq\mu<1$, strongly if $1\leq \mu<2$. They established observability inequalities for weakly and strongly degenerate equations respectively, which fails when $\mu \geq 2$. Then, the exact controllability of the weakly and strongly degenerate equations was proved by Hilbert uniqueness method. They also obtained the exponential stability of the system with linear boundary control. Bai and Chai \cite{Chai} proved the exact controllability of  the same system in domains with moving boundary, on which control acts.
Different from \cite{Alabau}, Gueye \cite{Gueye} concentrated on the degenerate wave equation with control acting on the degenerate boundary, and obtained the boundary exact controllability for $\mu<1$. Kogut, Kupenko and Leugering \cite{Kogut} focused on the case where the degenerate point is internal. They discussed how the defect at this damaged point affects the transmission conditions at the site.
Kogut et.al \cite{Wang} discussed some issues related to a special class of weighted Sobolev spaces and derived some Poincare's type inequalities for the weighted function $\alpha$. Some results will be listed in Section 2, which are helpful in proving our main result.

All the above works focus on a single system. For the coupled system, Kogut et al. \cite{network,network1} investigated the boundary observability and exact null controllability for weakly and strongly degenerate linear wave equation on a star-shaped planar network.
Salhi, Moumni and Tilioua \cite{couple,couple1} considered a system composed of two degenerate wave equations, which are coupled via velocity terms, with only one equation controlled by a boundary feedback. They proved the controllability and the exponential stability of this system by the multiplier method.
Furthermore, similar to the approach in this paper, Akil, Fragnelli and Issa \cite{two-strings} studied a coupled system of one degenerate and one non-degenerate string connected at endpoints: the left part is a wave equation degenerate at the left endpoint, and the right part is a non-degenerate wave equation. It is shown that exponential stability of the whole coupled system can be
achieved by applying control only at the right endpoint. Liu and Zhang \cite{LZ} showed the stability of an elastic string system with local Kelvin-Voigt damping. They assumed that the damping coefficient has a singularity at the interface
of the damped and undamped regions and behaves like $x^{\mu}$ near the interface. The polynomial or exponential stability, which depends on the parameter $\mu\in (0, 1]$, is obtained.

Furthermore, the topic of degenerate parabolic equation has received a lot of attention of many authors.
For the null controllability of one-dimensional parabolic system, Cannarsa, Ferretti and Martinez \cite{para} proved that, with respect to the ``damaged" point, ``one-side" control can only guarantee the controllability of weakly degenerate systems. For strongly degenerate systems, the degeneracy is so strong that ``two-sides" control is necessary to ensure the system's controllability. Moreover, there are many studies on the stability of a wave equation coupled with a degenerate parabolic equation. In \cite{Han 1}, Han, Wang and Wang proved that the semigroup corresponding to such a system has a decay rate of $O(t^{-\frac{3 -\mu}{2(1-\mu)}})$ as $t\to \infty$, where $\mu\in(0, 1)$. However, it is not the optimal decay rate. Tebou \cite{Tebou} improved the decay rate and established a better decay rate over different intervals with respect to the degree of the degeneracy for $\mu\in(0, \frac{3}{4})$. Later, by analyzing the norm of the resolvent operator along the imaginary axis, Han, Song and Yu \cite{Han 2} enhanced the decay rate to $t^{-\frac{2-\mu}{1-\mu}}$, which is consistent with the optimal result obtained in \cite{zuazua} for the constant-diffusion coefficient equation ($\mu=0$).

The rest of the paper is organized as follows. In Section 2, we introduce the weighted Sobolev space associated with the degenerate function and some inequalities on such spaces. Section 3 is devoted to the well-posedness of the system \eqref{sys}. In Section 4, we employ the frequency domain method to prove the polynomial stability of the system.  We use $C$ to denote generic positive constants that may vary from line to line (unless otherwise stated). $u'$ denotes the derivatives with respect to space and $\langle \cdot,\cdot\rangle$ denotes the $L^2$-inner product.
\section{Assumptions and Preliminaries}
\subsection{Assumptions}
Let $x_0$ and $\ell$ be given real numbers satisfying $-\infty<x_0<\ell<+\infty$, and let $\alpha:[x_0,\ell]\rightarrow\mathbb{R}$ be a function satisfying the following assumptions:
\begin{subequations}
	\begin{align}
		&(I)&&\alpha(x)>0,\;\forall\; x \in(x_0,\ell],\; \alpha(x_0) = 0;\\
		&(II)&&\mu = \sup_{x_0<x\leq \ell}{(x- x_0 )|\alpha'(x)|\over \alpha(x)} = \lim_{x\rightarrow x_0^+} {(x - x_0)|\alpha'(x)|\over \alpha(x)}<2;\label{mu-r}\\
		&(III)&&\alpha(x)\in C[x_0,\ell] \cap C^1(x_0,\ell].
	\end{align}
\end{subequations}

\begin{lemma}
	Let $\alpha:[x_0,\ell]\rightarrow\mathbb{R}$ be a function satisfying (I)-(III). Then\\
	(i)
	\begin{equation}\label{lemma-r}
		\alpha(x)\geq \alpha(\ell)\left({x - x_0\over \ell- x_0}\right)^{\mu},\;\;\forall\; x\in[x_0,\ell];
	\end{equation}
	(ii) ${\alpha}^{-1}(\cdot)\in L^1(x_0,\ell)$ for $0\leq\mu<1$.
\end{lemma}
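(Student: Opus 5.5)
The plan is to integrate the logarithmic-derivative bound supplied by assumption (II) from $x$ up to $\ell$. Part (ii) will then follow by comparing $\alpha^{-1}$ with an explicit power of $x-x_0$.

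For (i), fix $x\in(x_0,\ell]$. By (I) and (III), $\alpha$ is positive and $C^1$ on $[x,\ell]$, so $\ln\alpha$ is $C^1$ there. Assumption (II) gives, for every $s\in(x_0,\ell]$,
\begin{equation*}
	-\frac{\alpha'(s)}{\alpha(s)}\le \frac{|\alpha'(s)|}{\alpha(s)}\le \frac{\mu}{s-x_0}.
\end{equation*}
Integrating over $[x,\ell]$ yields
\begin{equation*}
	\ln\alpha(\ell)-\ln\alpha(x)=\int_x^\ell\frac{\alpha'(s)}{\alpha(s)}\,ds\ \ge\ -\mu\ln\frac{\ell-x_0}{x-x_0}.
\end{equation*}
That is, $\ln\alpha(x)\le \ln\alpha(\ell)+\mu\ln\frac{\ell-x_0}{x-x_0}$.

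Here the sign has to be handled with care. The inequality needed for (i) is the reverse direction, so I will instead use the other one-sided bound:
\begin{equation*}
	\frac{\alpha'(s)}{\alpha(s)}\le\frac{\mu}{s-x_0}
	\quad\Longrightarrow\quad
	\ln\alpha(\ell)-\ln\alpha(x)\le \mu\ln\frac{\ell-x_0}{x-x_0}.
\end{equation*}
Exponentiating gives $\alpha(x)\ge \alpha(\ell)\left(\frac{x-x_0}{\ell-x_0}\right)^{\mu}$ for every $x\in(x_0,\ell]$. At $x=x_0$ the right-hand side of \eqref{lemma-r} is $0$ when $\mu>0$, so the inequality holds trivially because $\alpha(x_0)=0$.

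The case $\mu=0$ needs a separate check, since then the right-hand side at $x_0$ is $\alpha(\ell)>0$. If $\mu=0$, then $\alpha'\equiv0$ on $(x_0,\ell]$. Hence $\alpha$ is a positive constant there, and by continuity $\alpha(x_0)=\alpha(\ell)>0$, which contradicts (I). So (I) and (II) together force $\mu>0$ and no separate case arises. I will record this remark explicitly, because it is the only delicate point in (i).

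For (ii), I apply (i) on $(x_0,\ell]$ to get
\begin{equation*}
	0<\alpha^{-1}(x)\le \frac{(\ell-x_0)^{\mu}}{\alpha(\ell)}\,(x-x_0)^{-\mu}.
\end{equation*}
Since $\mu<1$, the function $(x-x_0)^{-\mu}$ is integrable on $(x_0,\ell)$. Its integral is $(\ell-x_0)^{1-\mu}/(1-\mu)$, so $\int_{x_0}^\ell\alpha^{-1}\,dx\le (\ell-x_0)/\big((1-\mu)\alpha(\ell)\big)<\infty$. The function $\alpha^{-1}$ is continuous on $(x_0,\ell]$, hence measurable, and the single point $x_0$ has measure zero, so $\alpha^{-1}\in L^1(x_0,\ell)$.

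The main obstacle is bookkeeping rather than substance. One has to pick the correct one-sided bound on $\alpha'/\alpha$ so that the inequality points the right way, and one has to treat the endpoint $x_0$, where $\alpha$ vanishes and $\ln\alpha$ is undefined. The integration in (i) is therefore carried out only on $[x,\ell]$ with $x>x_0$.
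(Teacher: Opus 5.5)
Your proof is correct. The paper gives no proof of this lemma (it is taken from the cited degenerate-wave literature), where the standard argument is that $(x-x_0)^{\mu}/\alpha(x)$ is nondecreasing on $(x_0,\ell]$ because $(x-x_0)\alpha'(x)\le\mu\,\alpha(x)$; this is your integrated bound $\alpha'/\alpha\le\mu/(x-x_0)$ in another form, followed by the same comparison $\alpha^{-1}\le C(x-x_0)^{-\mu}$ for (ii). Your observation that (I)--(III) force $\mu>0$, so the endpoint $x=x_0$ is harmless, is also right; just delete the initial wrong-direction computation from the final write-up.
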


\begin{remark}
	It is easy to check that $a(x)$ and $b(x)$ satisfy the assumptions (I)-(III). From \eqref{lemma-r}, we have $a^{-1}\in L^1(-1,0)$.
\end{remark}
\subsection{Functional spaces}
We now introduce some weighted Sobolev spaces that are naturally associated with function $\alpha(x)$.
We denote by $V_{\alpha}^1(x_0,\ell)$ the space defined as follows
\begin{equation*}
	V_{\alpha}^1(x_0,\ell) = \left\{ u\in L^2(x_0,\ell)\;
	\left|\begin{array}{l}
		u\mbox{ is locally absolutely continuous in } (x_0,\ell],\\
		\sqrt{{\alpha}}u'\in L^2(x_0,\ell)
	\end{array}\right. \right\}.
\end{equation*}
It is easy to see that $V_{\alpha}^1(x_0,\ell)$ is a Hilbert space with the scalar product
\begin{equation*}
	\langle u ,v\rangle_
	{V_{\alpha}^1(x_0,\ell)}
	=\int_{x_0}^{\ell}
	u(x)v(x) + {\alpha}(x)u'(x)v'(x) dx,\;\;
	\forall\;u,v\in V_{\alpha}^1(x_0,\ell),
\end{equation*}
and associated norm
\begin{equation*}
	\|u\|_{V_{\alpha}^1(x_0,\ell)}
	=\left(\int_{x_0}^{\ell} |u(x)|^2 + {\alpha}(x)|u'(x)|^2 dx\right)^{1\over2},\;\;
	\forall\;u\in V_{\alpha}^1(x_0,\ell).
\end{equation*}
We also introduce the subspaces  $V_{\alpha,0}^1(x_0,\ell)$ and $V_{\alpha}^2(x_0,\ell)$ of $V_{\alpha}^1(x_0,\ell)$ defined as
\begin{equation*}
	V_{{\alpha},0}^1(x_0,\ell) = \left\{ u \in V_{\alpha}^1(x_0,\ell)\; |\; u(\ell) = 0\right\}.
\end{equation*}
and
\begin{equation*}
	V_{\alpha}^2(x_0,\ell) = \left\{ u \in V_{\alpha}^1(x_0,\ell)\;|\; {\alpha}u'\in H^1(x_0,\ell)\right\}.
\end{equation*}

We list some useful properties related to the above functional spaces without proof. The complete proof processes can be found in \cite{Alabau,Wang,Kogut}.
\begin{lemma}\label{lemma1}
	Assume (I)-(III) hold. Then, \\
	(1) For every $u\in V_{{\alpha}}^1(x_0,\ell)$,
	\begin{equation}\label{lemma3-2}
		\left\|u-u(\ell )\right\|_{L^2(x_0,\ell )}^2 \leq {(\ell - x_0)^2\over (2-\mu )\alpha(\ell )}
		\left\|\sqrt{\alpha}u'\right\|^2_{L^2(x_0 ,\ell )}.
	\end{equation}
	(2) For every $u\in V_{{\alpha},0}^1(x_0,\ell)$,
	\begin{equation}\label{lemma3-3}
		\left\|u\right\|_{L^2(x_0,\ell)}^2 \leq C_{\alpha} \left\|\sqrt{\alpha}u'\right\|_{L^2(x_0,\ell)}^2,
	\end{equation}
	where
	\begin{equation}
		C_{\alpha} = \max\left\{ {(\ell -x_0)^2\over (2-\mu )\alpha(\ell )},
		{4(\ell -x_0)^{\mu}\over {\alpha}(\ell )}\right\}.
	\end{equation}
	(3) For every $u\in V_{\alpha}^1(x_0,\ell)$,
	\begin{equation}\label{xu2}
		\lim_{x\rightarrow x_0^+} (x-x_0)u^2(x)
		=0.
	\end{equation}
(4) For every $u\in V_{\alpha}^2(x_0,\ell)$,
\begin{equation}\label{xau'2}
	\lim_{x\rightarrow x_0^+}(x - x_0){\alpha}(x)|u'(x)|^2
	=0,
\end{equation}
and if ${\alpha}$ degenerates strongly,
\begin{equation}\label{au'}
	\lim_{x\rightarrow x_0^+} {\alpha}(x)u'(x)
	= 0
\end{equation}
(5) For every $u\in V_{\alpha}^2(x_0,\ell)$ and $\varphi\in V_{\alpha}^1(x_0,\ell)$, if ${\alpha}$ degenerates strongly,
\begin{equation}\label{lemma-au'phi}
	\lim_{x\rightarrow x_0^+} {\alpha}(x)\varphi(x)u'(x)
	=0.
\end{equation}
If ${\alpha}$ degenerates weakly, \eqref{lemma-au'phi} also holds when $\varphi(x_0) = 0$.
\end{lemma}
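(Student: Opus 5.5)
The plan is to derive everything from the growth estimate \eqref{lemma-r} together with its companion bound from above. Write $s=x-x_0$ and $L=\ell-x_0$. Assumption (II) gives $|(\log\alpha)'|\le \mu/s$. Integrating this from $x$ to $\ell$ yields \eqref{lemma-r} and also $\alpha(x)\le \alpha(\ell)(s/L)^{-\mu}$. More usefully, it shows that $\alpha(x)/s^{\mu}$ is nonincreasing in $x$.

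\textbf{Step 1: proof of (1).} Set $v=u-u(\ell)$, so $v(\ell)=0$, and write $v(x)=-\int_x^\ell v'$. The plan is a weighted Hardy-type Cauchy--Schwarz estimate
\[
|v(x)|^2\le \int_x^\ell \frac{dy}{\alpha(y)}\,\cdot\int_x^\ell\alpha|v'|^2 .
\]
This form alone fails when $\mu\ge1$, so instead I would insert the weight $s^{\theta}$:
\[
|v(x)|^2\le\Big(\int_x^\ell \frac{y^{\theta}}{\alpha}\Big)\Big(\int_x^\ell y^{-\theta}\alpha|v'|^2\Big),
\]
with $y$ measured from $x_0$. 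Then integrate in $x$ and use Fubini. Combining this with $\alpha(y)\ge\alpha(\ell)(y/L)^{\mu}$ and choosing $\theta$ optimally (for instance $\theta=(\mu-1)/2$, the standard Hardy choice) should produce the constant $L^2/((2-\mu)\alpha(\ell))$. An alternative I would try first is the multiplier identity obtained by integrating $\frac{d}{dx}\big(s\,v^2\big)$:
\[
\int v^2 = L v(\ell)^2-\int 2 s v v' = -2\int s v v' .
\]
Cauchy--Schwarz then gives $\int v^2\le 4\int s^2|v'|^2$, and since $s^2\le L^{2-\mu}s^{\mu}\le (L^2/\alpha(\ell))\alpha$ by the monotonicity above, this yields (1) up to the constant. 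Getting exactly $(2-\mu)^{-1}$ requires the sharper weighted version, and the boundary term at $x_0$ must be handled by part (3). I expect matching the sharp constant to be the main obstacle.

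\textbf{Step 2: proof of (2) and (3).} Part (2) follows from (1) once $v(\ell)=0$; the second entry $4L^{\mu}/\alpha(\ell)$ in $C_\alpha$ covers the alternative route through the identity in Step 1.

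For (3), fix $x$ and write
\[
u(x)=u(\ell)-\int_x^\ell u' ,\qquad \Big|\int_x^\ell u'\Big|^2\le \int_x^\ell\alpha^{-1}\int_x^\ell\alpha|u'|^2 .
\]
Since $\int_x^\ell\alpha^{-1}\le C\int_x^\ell s^{-\mu}$, we get $s\int_x^\ell\alpha^{-1}\le C s^{2-\mu}L^{\mu-1}$ when $\mu<1$, $C s\log(L/s)$ when $\mu=1$, and $C s^{\,1-(\mu-1)}\cdot s^{\mu-1}=C$ when $\mu>1$. In the last case a simple bound is not enough, so I would split the integral at a fixed $\delta$: the tail $\int_x^\delta\alpha|u'|^2$ is small, giving $\limsup s u^2\le C\int_{x_0}^{x_0+\delta}\alpha|u'|^2\to0$.

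\textbf{Step 3: proof of (4) and (5).} For (4), put $g=\alpha u'\in H^1\subset C[x_0,\ell]$, so $g$ has a limit $g_0$ at $x_0$. Then $s\alpha|u'|^2=s g^2/\alpha$. If $g_0\ne0$, then $\alpha|u'|^2\sim g_0^2/\alpha$, and when $\mu\ge1$ this is not integrable, since $\alpha\le Cs^{\mu}\cdots$. I would use the upper bound $\alpha\le C s$-type growth, which follows from $\alpha/s^{\mu}$ being nonincreasing together with $\alpha$ bounded, to show that $1/\alpha\notin L^1$. This contradicts $\sqrt\alpha u'\in L^2$ and gives \eqref{au'} in the strong case. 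Then $s g^2/\alpha\le C s^{1-\mu}g^2$ needs care: I would instead write
\[
g(x)=g_0+\int_{x_0}^x g' ,\qquad |g(x)-g_0|\le\sqrt{s}\,\|g'\|_{L^2(x_0,x)},
\]
which gives $s g^2/\alpha\le s^2\,o(1)/\alpha\to0$ whenever $\mu\le 2$ and $g_0=0$. In the weak case $g_0\ne0$ is allowed, but then $s/\alpha\le Cs^{1-\mu}\to0$.

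Part (5) then follows by combining the bound $|\alpha\varphi u'|=|g\varphi|$ with (3) and the decay of $g$. In the weak case, $\varphi$ is continuous up to $x_0$ (because $\alpha^{-1}\in L^1$), so $\varphi(x_0)=0$ and $g$ bounded suffice. In the strong case, $|g\varphi|\le \sqrt{s}\,o(1)\,|\varphi|=o(1)\sqrt{s\varphi^2}\to0$ by (3).
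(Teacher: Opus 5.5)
The paper lists this lemma without proof and defers to the literature, so I am judging your argument on its own. The deduction of (2) from (1), your proof of (3), the weak case of (5), and the reduction of (4)--(5) to showing $g_0=0$ are sound. In (3), for $\mu>1$ you actually have $s\int_x^\ell\alpha^{-1}\le Cs^{2-\mu}\to0$, so the splitting is unnecessary, though harmless. There are two genuine gaps.

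\textbf{The constant in (1).} Neither route you describe produces $L^2/((2-\mu)\alpha(\ell))$. The multiplier identity gives $4L^2/\alpha(\ell)$. This exceeds the stated constant whenever $\mu<7/4$, and it is absorbed by the second entry of $C_\alpha$ only if $L\le 1$. Your suggested weight $\theta=(\mu-1)/2$ gives $\theta-\mu=-(1+\mu)/2$. So for $\mu\ge1$ the inner integral $\int_x^\ell s^{\theta-\mu}$ is unbounded as $x\to x_0$, and for $\mu<1$ the method yields $2L^2/((1-\mu)\alpha(\ell))$, which blows up as $\mu\to1^-$. The weight that works is $\theta=1$: by Cauchy--Schwarz and \eqref{lemma-r},
\[
|v(x)|^2\le\int_x^\ell\frac{y-x_0}{\alpha(y)}\,dy\int_x^\ell\frac{\alpha(y)|v'(y)|^2}{y-x_0}\,dy\le\frac{L^2}{(2-\mu)\alpha(\ell)}\int_x^\ell\frac{\alpha(y)|v'(y)|^2}{y-x_0}\,dy .
\]
Integrating in $x$ over $(x_0,\ell)$ and using Fubini turns the last integral into exactly $\|\sqrt{\alpha}v'\|^2_{L^2(x_0,\ell)}$. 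This gives (1) with the stated constant, and no boundary term at $x_0$ arises.

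\textbf{Non-integrability of $1/\alpha$ in the strong case.} Both claims of (4) and the strong case of (5) rest, in your argument, on $1/\alpha\notin L^1$, and your justification for it is wrong. That $\alpha/s^{\mu}$ is nonincreasing gives $\alpha\ge c\,s^{\mu}$, which is a \emph{lower} bound. Boundedness of $\alpha$ adds no upper bound of the form $\alpha\le Cs$. Even $\alpha\le Cs^{\mu}$ fails in general, e.g.\ for $\alpha=s^{\mu}\log(1/s)$.

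For $\mu>1$ the missing ingredient is the clause ``$\sup=\lim$'' in (II), which you never use. Since $s|\alpha'|/\alpha\to\mu>1$, $\alpha'$ has constant sign near $x_0$, and it cannot be negative because $\alpha>0=\alpha(x_0)$. Hence $(\alpha/s)'=(s\alpha'-\alpha)/s^2>0$ near $x_0$, so $\alpha\le Cs$ there and $1/\alpha\notin L^1$.

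For $\mu=1$ no argument can work under (I)--(III) alone. Take $\alpha=s\log^2(1/s)$ on $(x_0,x_0+e^{-3}]$. It satisfies $s\alpha'/\alpha=1-2/\log(1/s)\uparrow1$ and $\int\alpha^{-1}<\infty$. Then $u(x)=-\int_x^\ell\alpha^{-1}$ lies in $V_\alpha^2(x_0,\ell)$ with $\alpha u'\equiv1$. This violates the second claim of (4), and violates (5) with $\varphi\equiv1$. So at $\mu=1$ an extra hypothesis is needed, e.g.\ $\alpha\in C^1[x_0,\ell]$, which gives $\alpha\le\|\alpha'\|_\infty s$; the paper's $b$ satisfies this.

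The first claim of (4) does survive at $\mu=1$, because $s/\alpha$ is monotone. Either $1/\alpha\notin L^1$, in which case $g_0=0$ and your estimate applies, or $s/\alpha\to0$ and boundedness of $g$ suffices.
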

\section{Well-posedness}
Let us denote by $V_a^1(0,1)$  the space $V_{\alpha}^1(x_0,\ell)$ with $\alpha =a(x) $, $x_0 = 0$ and $\ell =1$, and by $V_b^1(-1,0)$ the space $V_{\alpha}^1(x_0,\ell)$ with $\alpha=b(x)$, $x_0 = -1$ and $\ell =0$.

If $\mu_b\in[0,1)$, we denote by $W_{b}^1(-1,0)$ the subspace of $V_{b}^1(-1,0)$ defined as
\begin{equation*}
W_{b}^1(-1,0) =\left\{u\in V_{b}^1(-1,0)\;|\;u(-1) =0\right\}.
\end{equation*}
If $\mu_b\in[1,2)$, on the other hand, $W_{b}^1(-1,0)$ coincides with $V_{b}^1(-1,0)$ itself.

Moreover, we set
$$W_{b}^2(-1,0) = V_{b}^2(-1,0) \cap W_{b}^1(-1,0).$$
Notice that $W_{b}^2(-1,0) = V_{b}^2(-1,0)$ when $\mu_b\in[1,2)$.

Now, we consider the Hilbert space
$$\mathcal H_1 = V_a^1(0,1) \times L^2(0,1)\times W_{b}^1(-1,0) \times L^2(-1,0),\;\;\;\;
\mathcal H_2 = V_a^1(0,1) \times L^2(0,1)\times V_{b}^1(-1,0) \times L^2(-1,0),$$
endowed with the norm, $\forall\; U=(u,v,w,y)\in\mathcal H_i$, $i=1,2$,
\begin{equation*}
\begin{split}
	\|U\|_{\mathcal H}^2 = \left\|v\right\|_{L^2(0,1)}^2
	+ \left\|y\right\|_{L^2(-1,0)}^2
	+\left\|\sqrt{a}u'\right\|_{L^2(0,1)}^2
	+\left\|\sqrt{b}w'\right\|_{L^2(-1,0)}^2
	+ \gamma a(1)\left|u(1)\right|^2.
\end{split}
\end{equation*}
Define an operator $\mathcal A_i:\mathcal D(\mathcal A_i)\subset \mathcal H_i\rightarrow\mathcal H_i$, $i=1,2$, by
\begin{equation}\label{def}
\mathcal A_i U = (v,(au')',y,(bw')'),\;\;\forall\;U = (u,v,w,y),
\end{equation}
with domain
\begin{equation*}
\mathcal D(\mathcal A_1) = \left\{(u,v,w,y)\in \mathcal H_1\;\left|\;\begin{array}{l}
	u\in V_a^2(0,1),\;v\in V_a^1(0,1),
	w\in W_{b}^2(-1,0),\; y\in W_b^1(-1,0),\\
	u(0) = w(0),\;
	\gamma u(1) + u'(1) + v(1) = 0,
	\lim_{x\to  0^+} (au')(x) = (bw')(0)
\end{array}\right.\right\},
\end{equation*}
and
\begin{equation*}
\mathcal D(\mathcal A_2) = \left\{(u,v,w,y)\in \mathcal H_2\;\left|\;\begin{array}{l}
	u\in V_a^2(0,1),\;v\in V_a^1(0,1),
	w\in V_{b}^2(-1,0),\; y\in V_b^1(-1,0),\\
	u(0) = w(0),\;
	\gamma u(1) + u'(1) + v(1) = 0,
	\lim_{x\to  0^+} (au')(x) = (bw')(0)
\end{array}\right.\right\}.
\end{equation*}
Let $U(t) = (u(t),v(t),w(t),y(t))$ and $U_0= (u_0,u_1,w_0,w_1)$, then the system \eqref{sys} can be rewritten as
\begin{equation*}
{d\over dt}U(t) = \mathcal A_iU(t),\;\;U(0)=U_0\in\mathcal H_i,\;i=1,2.
\end{equation*}
\begin{lemma}
The operator $\mathcal A_i$ generates a $C_0$-semigroup of contractions $T_i(t)$ on the energy space $\mathcal H_i$, and $0\in\rho(\mathcal A_i)$, the resolvent of the operator $\mathcal A_i$, $i=1,2$.
\end{lemma}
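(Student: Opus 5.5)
The plan is to apply the Lumer--Phillips theorem. I will show that $\mathcal A_i$ is dissipative on $\mathcal H_i$ and that $0\in\rho(\mathcal A_i)$. Since $\rho(\mathcal A_i)$ is open, this gives $\lambda\in\rho(\mathcal A_i)$ for small $\lambda>0$, and therefore the range condition. Density of $\mathcal D(\mathcal A_i)$ then follows because $\mathcal H_i$ is reflexive.

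\textbf{Step 1: $\mathcal H_i$ is a Hilbert space.} I first check that $\|\cdot\|_{\mathcal H}$ is a norm equivalent to the natural product norm. This is where Lemma \ref{lemma1} enters.

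For $u$, write $u=(u-u(1))+u(1)$. By \eqref{lemma3-2},
\[
\|u\|_{L^2(0,1)}^2\le C\bigl(\|\sqrt a u'\|^2+\gamma a(1)|u(1)|^2\bigr).
\]
For $w$, I write $w=(w-w(0))+w(0)$ and use \eqref{lemma3-2} on $(-1,0)$. The value $w(0)=u(0)$ is controlled as follows. Since $a^{-1}\in L^1$ ($\mu_a<1$), Cauchy--Schwarz gives
\[
|u(0)-u(1)|\le \|a^{-1}\|_{L^1}^{1/2}\,\|\sqrt a u'\|.
\]
So $|u(0)|$ is also bounded by the energy norm. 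This works for both $i=1,2$, and the coupling condition $u(0)=w(0)$ is exactly what makes $\mathcal H_2$ a genuine norm space even without the Dirichlet condition at $-1$.

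To be careful, I treat $\mathcal H_i$ as the closed subspace of the product in which $u(0)=w(0)$. Point values at $0$ make sense: $u(0)$ exists since $\mu_a<1$, and $w(0)$ is an interior/regular point for $b$.

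\textbf{Step 2: Dissipativity.} For $U\in\mathcal D(\mathcal A_i)$ I compute $\mathrm{Re}\langle \mathcal A_iU,U\rangle_{\mathcal H}$ by integrating $(au')'\bar v$ and $(bw')'\bar y$ by parts.

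Boundary terms at the degenerate ends vanish by Lemma \ref{lemma1}(5):
\begin{itemize}
\item In the strong case, directly from \eqref{lemma-au'phi}.
\item In the weak case $\mu_b<1$, because $y(-1)=0$.
\item At $x=0$ (degenerate for $a$, weak), \eqref{lemma-au'phi} does not apply directly since $v(0)\ne0$ in general. Instead, $au'$ has a limit because $au'\in H^1$ and $v$ is continuous up to $0$ since $a^{-1}\in L^1$. The terms $(au')(0)\bar v(0)$ and $(bw')(0)\bar y(0)$ cancel by the transmission conditions $u(0)=w(0)$, which imply $v(0)=y(0)$ for the domain.
\end{itemize}

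At $x=1$, I use $u'(1)=-\gamma u(1)-v(1)$. The $\gamma a(1)u(1)$ terms then cancel with the last norm term, leaving
\[
\mathrm{Re}\langle\mathcal A_iU,U\rangle=-a(1)|v(1)|^2\le0.
\]
(As written, the boundary condition lacks the factor $a(1)$, but $a(1)=1$.)

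\textbf{Step 3: $0\in\rho(\mathcal A_i)$.} Given $F=(f_1,f_2,f_3,f_4)\in\mathcal H_i$, I solve $\mathcal A_iU=F$. This gives $v=f_1$ and $y=f_3$. The remaining equations are $(au')'=f_2$, $(bw')'=f_4$, together with the coupling conditions, $w(-1)=0$ or the Neumann limit, and
\[
\gamma u(1)+u'(1)=-f_1(1).
\]
I solve these variationally on the space
\[
\mathcal V=\{(u,w)\in V_a^1\times W_b^1:\ u(0)=w(0)\}
\]
with the bilinear form
\[
\int_0^1 a u'\bar\phi' +\int_{-1}^0 b w'\bar\psi' +\gamma u(1)\bar\phi(1).
\]
This form is coercive by Step 1, and the linear functional is bounded, so Lax--Milgram gives a weak solution.

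\textbf{Step 4: Regularity (expected main obstacle).} I must show that the weak solution lies in the domain: $au'\in H^1$, the natural boundary conditions hold, and the transmission flux condition holds in limit form.

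I first take test functions compactly supported in each open interval. This gives $(au')'=f_2\in L^2$, so $au'\in H^1$ and the limit at $0^+$ exists. Then I integrate by parts against general test functions, again using Lemma \ref{lemma1}(5) at the degenerate ends. This recovers the remaining conditions:
\begin{itemize}
\item $(au')(0^+)=(bw')(0)$, from test functions with $\phi(0)=\psi(0)\neq0$;
\item the Robin condition at $1$;
\item in the strong case, the Neumann limit at $-1$ via \eqref{au'}.
\end{itemize}

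Finally, the bound $\|U\|\le C\|F\|$ follows from coercivity. Together with uniqueness, which comes from dissipativity (or directly from coercivity), this shows that $\mathcal A_i^{-1}$ is bounded, so $0\in\rho(\mathcal A_i)$ and Lumer--Phillips applies.
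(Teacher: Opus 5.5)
Your proof is correct. For the key step $0\in\rho(\mathcal A_i)$ it takes a different route from the paper.

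\textbf{The paper's route.} The paper integrates the ODEs $(au')'=f_2$, $(bw')'=g_2$ explicitly, using $a^{-1}\in L^1(0,1)$, and $b^{-1}\in L^1(-1,0)$ when $\mu_b<1$. It fixes the two free constants $(bw')(0)$ and $u(0)$ from the Robin condition together with $w(-1)=0$ or $\lim_{x\to-1^+}bw'=0$. Injectivity is the same computation with $F=0$. Dissipativity is only asserted. None of the following is written out: membership of the resulting $u,w$ in $V_a^2$ and $W_b^2$, a bound on $\mathcal A_i^{-1}$, density of the domain, or the appeal to Lumer--Phillips.

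\textbf{Your route.} You get existence, uniqueness and $\|U\|_{\mathcal H}\le C\|F\|_{\mathcal H}$ at once from Lax--Milgram on $\mathcal V$. The price is the regularity step (Step 4), needed to recover $au'\in H^1$, the Robin condition and the flux condition.

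\textbf{Comparison.} The explicit route gives closed-form solutions and needs no weak formulation. Yours works for any weights satisfying (I)--(III), treats $i=1,2$ uniformly, and supplies the estimates the paper skips.

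Your Step 1 also repairs a genuine defect of the paper's set-up. On the paper's $\mathcal H_2$, the quantity $\|\cdot\|_{\mathcal H}$ vanishes on $(0,0,1,0)$, so it is not a norm. On the paper's $\mathcal H_1$ (for $\mu_b<1$), the domain lies in the proper closed subspace $\{u(0)=w(0)\}$, so it is not dense and $\mathcal A_1$ cannot be a generator there. Building $u(0)=w(0)$ into the energy space is the right fix.

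Two small points to tighten.

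\begin{enumerate}
\item The condition $u(0)=w(0)$ does not by itself imply $v(0)=y(0)$. What forces it is that $\mathcal A_iU=(v,(au')',y,(bw')')$ must lie in your constrained $\mathcal H_i$. State $v(0)=y(0)$ explicitly as part of $\mathcal D(\mathcal A_i)$. The paper's domain omits it, and without it the extra term $\Re\bigl[(bw')(0)\,\overline{(y(0)-v(0))}\bigr]$ destroys dissipativity. Also note that in Step 3 this condition holds because $f_1(0)=f_3(0)$ for $F\in\mathcal H_i$.
\item Injectivity of $\mathcal A_i$ does not come from dissipativity, which gives no information at $\lambda=0$. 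It comes from coercivity, after integrating $\mathcal A_iU=0$ by parts against $(u,w)$ exactly as in Step 4.
\end{enumerate}
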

\begin{proof}
For every $U=(u,v,w,y)\in\mathcal D(\mathcal A_i)$, $i=1,2$ it follows from \eqref{def} that
$$\Re \langle \mathcal A_iU,U \rangle = - a(1)\left|v(1)\right|^2 \leq 0,$$
which implies that $\mathcal A_i$ is dissipative.

Next, we shall show that $0\in\rho(\mathcal A_i)$, $i=1,2$. Let $U\in\mathcal D(\mathcal A_1)$ satisfy $\mathcal A_1 U = 0$.  Then,
\begin{equation}
	\begin{cases}
		v=0, &x\in[0,1],\\
		(au')' = 0,&x\in[0,1],\\
		y = 0,&x\in[-1,0],\\
		(bw')' =0,&x\in[-1,0].
	\end{cases}
\end{equation}
Then, it follows from the transmission condition that
\begin{equation}\label{0-du}
	u' = (bw')(0)x^{-\mu_a},\;\;\forall\;x\in(0,1),
\end{equation}
and
\begin{equation}\label{0-dw}
	w' = (bw')(0)(1+x)^{-\mu_b},\;\;\forall\;x\in(-1,0).
\end{equation}
Since $w\in W_b^1(-1,0)$, it holds that $w(-1) = 0$. Then, for any $x\in(-1,0)$,
$$ w(x)= {(bw')(0)\over1-\mu_b}(1+x)^{1-\mu_b}. $$
By the transmission condition, one has, for any $x\in(0,1)$,
\begin{equation}
	\begin{split}
		u(x) =(bw')(0)\left({x^{1-\mu_a}\over 1-\mu_a} + {1\over1-\mu_b}\right).
	\end{split}
\end{equation}
Substituting the above results into the boundary condition, one has
$$\gamma u(1)+u'(1)+v(1) = (bw')(0) \left(1 + {\gamma\over1-\mu_a} +{\gamma\over1-\mu_b}\right) = 0,$$
which implies that $(bw')(0) = 0$, i.e., $U\equiv0$.

Let $U\in\mathcal D(\mathcal A_2)$ satisfy $\mathcal A_2U=0$, we have that \eqref{0-du} and \eqref{0-dw} remain valid. Since $1\leq\mu_b<2$, one has
\begin{equation*}
	\lim_{x\rightarrow -1^+} (bw')(x) = 0,
\end{equation*}
combining with the transmission condition, which implies that
$$au'\equiv 0,\;\forall\; x\in(0,1),$$and
$$bw'\equiv 0,\;\forall\; x\in(-1,0).$$
Then, it follows from $a(x)>0$ $(x\neq0)$, $b(x)>0$ $(x\neq-1)$ and the transmission condition that
$$u(x)\equiv u(1),\;\;\forall\;x\in(0,1),$$
and $$ w(x) \equiv u(1),\;\;\forall\; x\in(-1,0).$$
Substituting the above results into the boundary condition yields
$\gamma u(1) = 0$,
which implies that $U\equiv 0$.

Then, we prove that $\mathcal A_i\;(i=1,2)$ is surjective. For any fixed $F= (f_1,f_2,g_1,g_2)\in \mathcal H_1\setminus\{0\}$. Let $\mathcal A_1U = F$, i.e.,
\begin{subequations}
	\begin{align}
		&v=f_1,&&x\in[0,1],\label{0-a}\\
		&(au')'=f_2,&&x\in[0,1],\label{0-b}\\
		&y=g_1,&&x\in[-1,0],\label{0-c}\\
		&(bw')'= g_2,&&x\in[-1,0].\label{0-d}
	\end{align}
\end{subequations}
From \eqref{0-b} and the transmission condition, one has
\begin{equation}\label{0-u'}
	u'(x)={(bw')(0)\over a(x)}+{1\over a(x)}\int_0^x f_2(r)dr,\;\;\forall\;x\in(0,1),
\end{equation}
and
\begin{equation}\label{0-u}
	u(x) = u(0) + \int_0^x {1\over a(s)}\left((bw')(0)+\int_0^s f_2(r)dr\right) ds,\;\;\forall\;x\in(0,1).
\end{equation}
Since $w\in W_b^2(-1,0)$, it holds that $w(-1) =0$. Then,
\begin{equation}\label{0-w'}
	w'(x) = {(bw')(0)\over b(x)} - {1\over b(x)}\int_x^0 g_2(r)dr,\;\;\forall\;x\in(-1,0),
\end{equation}
and
\begin{equation}\label{0-w}
	w(x) = \int_{-1}^x {1\over b(s)} \left( (bw')(0)  -  \int_s^0 g_2(r)dr\right) ds,\;\;\forall\;x\in(-1,0).
\end{equation}
By the transmission conditions, one has that
\begin{equation}\label{0-u00}
	u(0) = w(0) = {(bw')(0)\over 1-\mu_b} - \int_{-1}^0{1\over b(s)} \int_s^0 g_2(r)dr ds.
\end{equation}
Substituting \eqref{0-a}, \eqref{0-u'} and \eqref{0-u} into the boundary conditions $\gamma u(1) + u'(1) +v(1) =0$ , one has that
\begin{equation}\label{0-au'00}
	\begin{split}
		(bw')(0) = &\left(1 + {\gamma\over1-\mu_b}+{\gamma\over1-\mu_a}\right)^{-1}
		\left(\gamma\int_{-1}^0 {1\over b(s)}\int_s^0 g_2(r) dr ds - f_1(1)\right.\\
		&\left.\quad-\int_0^1 f_2(r)dr
		- \gamma\int_0^1{1\over a(s)}\int_0^sf_2(r)dr ds \right).
	\end{split}
\end{equation}
Thus, $u(x)$ and $w(x)$ can be uniquely determined by \eqref{0-u}, \eqref{0-w}, \eqref{0-u00} and \eqref{0-au'00}.

Let $\mathcal A_2 U= F$, then \eqref{0-a}-\eqref{0-d}, \eqref{0-u'} and \eqref{0-u} remain valid. It follows from $\lim_{x\rightarrow -1^+} bw'= 0$ that
\begin{equation}
	(bw')(x) = \int_{-1}^x g_2(r)dr,\;\;\forall\;x\in(-1,0),
\end{equation}
and
\begin{equation}\label{0-w1}
	w(x) = w(0) - \int_x^0{1\over b(s)}\int_{-1}^s g_2(r) drds,\;\;\forall\;x\in(-1,0).
\end{equation}
By the transmission condition, one has that
\begin{equation}\label{0-au'0}
	\lim_{x\to0^+}(au')(x) = (bw')(0) = \int_{-1}^0 g_2(r)dr.
\end{equation}
Substituting \eqref{0-au'0} into \eqref{0-u'}, \eqref{0-u}, respectively, and combining with the boundary condition, we have that
\begin{equation}\label{0-u0}
	\begin{split}
		u(0) = -{1\over\gamma} \int_0^1 f_2(r)dr - \int_0^1 {1\over a(s)}\int_0^s f_2(r)dr ds
		- {1\over\gamma} f_1(1)-\left({1\over\gamma} + {1\over1-\mu_a}\right)\int_{-1}^0 g_2(r)dr.
	\end{split}
\end{equation}
In the end, $u(x)$ and $w(x)$ can be  uniquely determined by \eqref{0-u}, \eqref{0-w1}, \eqref{0-au'0} and \eqref{0-u0}.
\end{proof}
\begin{lemma}
$i\mathbb R\subset \rho(\mathcal A_i)$, $i=1,2$.
\end{lemma}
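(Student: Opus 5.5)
We argue by contradiction, using the previous lemma. Since $0\in\rho(\mathcal A_i)$ and the resolvent set is open, suppose $i\mathbb R\not\subset\rho(\mathcal A_i)$. Then there is $\omega\in\mathbb R$, $\omega\neq 0$, with $i\omega\in\sigma(\mathcal A_i)$. We first aim to show that $i\omega$ would have to be an eigenvalue, and then that no such eigenvalue exists.

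\textbf{Reduction to eigenvalues.} We show that $\mathcal A_i^{-1}$ is compact on $\mathcal H_i$, so that $\sigma(\mathcal A_i)$ consists only of isolated eigenvalues. Boundedness of $\mathcal A_i^{-1}$ from $\mathcal H_i$ into $\mathcal D(\mathcal A_i)$ with the graph norm follows from the explicit formulas \eqref{0-u'}--\eqref{0-u0}. It then remains to check that $\mathcal D(\mathcal A_i)$ embeds compactly into $\mathcal H_i$.
\begin{itemize}
\item For the $v$ and $y$ components, we need $V_a^1(0,1)\hookrightarrow L^2(0,1)$ compact, and likewise for $b$. Away from the degenerate point this is Rellich's theorem. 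Near the degenerate point we use the weighted Hardy/Poincaré estimates of Lemma \ref{lemma1}, together with the bound $a\ge C x^{\mu_a}$ from \eqref{lemma-r}, to make the $L^2$ mass on $(0,\delta)$ uniformly small.
\item For the $u$ and $w$ components, we need $V_a^2\hookrightarrow V_a^1$ compact. Here we use that $au'\in H^1$, which is compact in $C[0,1]$. Hence $\sqrt a u'=(au')/\sqrt a$ converges in $L^2$, since $1/a\in L^1$ when $\mu_a<1$. In the strongly degenerate case for $b$ we additionally use \eqref{au'}: it gives $|bw'|\le C(1+x)^{1/2}$, so that $|bw'|^2/b$ is integrable uniformly near $-1$ for $\mu_b<2$.
\end{itemize}
This compactness step is the main technical point. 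If it turns out to be messy, the alternative is to argue directly with a sequence: take $U_n\in\mathcal D(\mathcal A_i)$ with $\|U_n\|=1$ and $(i\omega-\mathcal A_i)U_n\to0$, and use the same estimates to extract a limit.

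\textbf{No eigenvalues on the imaginary axis.} Suppose $\mathcal A_iU=i\omega U$ with $U\neq0$.
\begin{enumerate}
\item Dissipativity gives $0=\Re\langle\mathcal A_iU,U\rangle=-a(1)|v(1)|^2$, so $v(1)=0$. Since $v=i\omega u$ and $\omega\neq0$, this gives $u(1)=0$. The boundary condition then yields $u'(1)=0$.
\item Near $x=1$ the equation $\omega^2u+(au')'=0$ is a regular second-order ODE, since $a(1)>0$ and $a$ is $C^1$ away from $0$. With Cauchy data $u(1)=u'(1)=0$, uniqueness gives $u\equiv0$ on $(0,1]$.
\item Consequently $\lim_{x\to0^+}au'=0$ and $u(0)=0$. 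The transmission conditions give $w(0)=0$ and $(bw')(0)=0$. Since $b(0)=1>0$, uniqueness for the regular ODE $\omega^2w+(bw')'=0$ on $(-1,0]$ gives $w\equiv0$.
\end{enumerate}
Hence $U=0$, a contradiction, and $i\mathbb R\subset\rho(\mathcal A_i)$ for $i=1,2$.

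Uniqueness has to be applied on each compact subinterval $[\varepsilon,1]$ or $[-1+\varepsilon,0]$, where the coefficient is bounded below, and then extended by letting $\varepsilon\to0$. This avoids any difficulty at the degenerate point.
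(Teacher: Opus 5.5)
Your argument is correct, but it follows a different route from the paper's.

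\textbf{The paper's route.} The paper never reduces to eigenvalues. It takes $\eta>0$ maximal with $\{i\lambda:|\lambda|<\eta\}\subset\rho(\mathcal A_1)$ and chooses $\|U_n\|=1$, $|\lambda_n|\to\eta$ with $(i\lambda_n-\mathcal A_1)U_n\to0$. It then works directly with these approximate eigenvectors:
\begin{itemize}
\item the multiplier $2xu_n'$ on $(0,1)$, together with the dissipation, gives $\|v_n\|,\|\sqrt a u_n'\|\to0$;
\item integrability of $a^{-1}$ (i.e.\ $\mu_a<1$) and boundedness of $\lambda_n$ give $v_n(0)\to0$ and $\lim_{x\to0^+}(au_n')(x)\to0$;
\item the multiplier $2(1+x)w_n'$, supplemented by $w_n$ when $\mu_b\ge1$, then gives $\|y_n\|,\|\sqrt b w_n'\|\to0$, contradicting $\|U_n\|=1$.
\end{itemize}
This needs no compactness. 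It is also a rehearsal of the multiplier scheme used later with $\lambda_n\to\infty$ for the polynomial decay.

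\textbf{Your route.} You pay up front with the compact-resolvent step. After that, excluding imaginary eigenvalues is nearly free. Vanishing dissipation yields the Cauchy data $u(1)=u'(1)=0$. ODE uniqueness on the nondegenerate subintervals then propagates $u\equiv0$ through the junction, where $b(0)=1>0$, to $w\equiv0$. None of the degeneracy-dependent constants $1-\mu_a$, $1-\mu_b$, $2-\mu_b$ appear.

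\textbf{What still has to be written out.} The step you only sketch is compactness of $V_b^1(-1,0)\hookrightarrow L^2(-1,0)$ for strongly degenerate $b$. This requires a uniform tail bound
\[
\int_{-1}^{-1+\delta}|y|^2\,dx\le\varepsilon(\delta)\,\|y\|_{V_b^1}^2,\qquad \varepsilon(\delta)\to0.
\]
You get it from part (1) of Lemma \ref{lemma1} applied on $(-1,-1+\delta)$, together with an estimate of $\delta|y(-1+\delta)|^2$. For $\mu_b=1$ a logarithmic factor appears, so do not claim a pure power $\delta^{2-\mu_b}$. Your flux bound $|bw'|\le(1+x)^{1/2}\|(bw')'\|_{L^2}$ handles the $w$-component correctly.

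Finally, boundedness of $\mathcal A_i^{-1}$ into $\mathcal D(\mathcal A_i)$ with the graph norm is automatic from $0\in\rho(\mathcal A_i)$, so the explicit formulas are not needed.
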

\begin{proof}
We first show that $i\mathbb R\subset\rho(\mathcal A_1)$. Suppose the statement of this lemma is not true. Then, there exist $\eta>0$ satisfying that $\{i\lambda\;|\;|\lambda|<\eta\}\subset\rho(\mathcal A_1)$. In this case, we can find a sequence $\{\lambda_n,U_n\}$ satisfying $|\lambda_n|\to\eta$, $|\lambda_n|<\eta$ and $\|U_n\|_{\mathcal H_1} = 1$, such that $$(i\lambda_n - \mathcal A_1)U_n = o(1),$$
which is equivalent to
\begin{subequations}
	\begin{align}
		&i\lambda_n u_n - v_n = o(1),&&\mbox{in}\;V_a^1(0,1),\label{lemma-a}\\
		&i\lambda_n v_n - (au_n')' = o(1),&&\mbox{in}\;L^2(0,1),\label{lemma-b}\\
		&i\lambda_n w_n - y_n = o(1),&&\mbox{in}\;W_b^1(-1,0),\label{lemma-c}\\
		&i\lambda_ny_n - (bw_n')' = o(1),&&\mbox{in}\;L^2(-1,0).\label{lemma-d}
	\end{align}
\end{subequations}
By the dissipativeness of the operator $\mathcal A_1$,
\begin{equation}\label{lemma-diss}
	\Re\langle \mathcal A_1 U_n , \; U_n\rangle = - a(1)|v_n(1)|^2 = o(1).
\end{equation}
From \eqref{lemma-a} and the boundary condition, one has that
\begin{equation}\label{lemma-bound}
	|u_n(1)| = \lambda_n^{-1}o(1),\;\; |u'_n(1)| \leq  o(1).
\end{equation}
Multiplying \eqref{lemma-b} by $2xu_n'$ and integrating over $(0, 1)$ and using lemma \ref{lemma1}, we obtain
\begin{equation}
	\begin{split}
		\|v_n\|_{L^2(0,1)}^2 + (1-\mu_a)\|\sqrt a u_n'\|_{L^2(0,1)}^2-a(1)|u_n'(1)|^2 -|v_n(1)|^2= o(1).
	\end{split}
\end{equation}
Notice that $0\leq\mu_a<1$, it follows from \eqref{lemma-diss} and \eqref{lemma-bound}  that
\begin{equation}\label{lemma-u}
	\|v_n\|_{L^2(0,1)} = o(1),\; \|\sqrt{a}u_n'\|_{L^2(0,1)} = o(1).
\end{equation}
Multiplying \eqref{lemma-d} by $2(1+x)w_n'$ and integrating over $(-1,0)$ and using lemma \ref{lemma1} again, we obtain
\begin{equation}\label{lemma-w}
	\begin{gathered}
		\|y_n\|_{L^2(-1,0)}^2
		+ (1-\mu_b)\|\sqrt{b}w_n'\|^2_{L^2(-1,0)}
		- b(0)|w_n'(0)|^2-|y_n(0)|^2
		= o(1).
	\end{gathered}
\end{equation}
It is clear that $a^{-1}\in L^1(0,1)$ since $0\leq \mu_a<1$.Then, we deduce from \eqref{lemma-a} that
\begin{equation}\label{lemma-v0}
	\begin{split}
		|v_n(0)| \leq &|\lambda_n|\left|\int_0^1 u_n' dx\right| + |v_n(1)|\\
		\leq& |\lambda_n|\left(\int_{0}^1 {1\over a} dx\right)^{1\over2}
		\left(\int_0^1 a|u_n'|^2dx\right)^{1\over2} + |v_n(1)|\\
		= & {\lambda_n\over\sqrt{1-\mu_a}}
		\|\sqrt{a}u_n'\|_{L^2(0,1)} + |v_n(1)|,
	\end{split}
\end{equation}
by the transmission condition, \eqref{lemma-diss}, \eqref{lemma-u} and $|\lambda_n|<\eta$,  which implies that
\begin{equation}\label{lemma-y0}
	|y_n(0)| = |v_n(0)| \leq o(1).
\end{equation}
Furthermore, since $b(0) =1$ and using the transmission condition, it holds that
\begin{equation}\label{lemma-bw'0}
	b(0)|w_n'(0)|^2 = |b(0)w_n'(0)|^2 = \left|\lim_{x\to 0^+}(au_n')(x)\right|^2.
\end{equation}
Then, it follows from $|\lambda_n|<\eta$, \eqref{lemma-b}, \eqref{lemma-bound} and \eqref{lemma-u} that
\begin{equation}\label{lemma-au'0}
	\begin{split}
		\left|\lim_{x\to 0^+}(au_n')(x)\right|
		\leq& \left|\int_0^1 (au')' dx\right| + |a(1)u_n'(1)|\\
		\leq& |\lambda_n|\left|\int_0^1 v_n dx\right| + |a(1)u_n'(1)|\\
		\leq& o(1).
	\end{split}
\end{equation}
From \eqref{lemma-w}--\eqref{lemma-au'0}, we have
\begin{equation}\label{lemma-w'}
	\|y_n\|_{L^2(-1,0)}^2 + (1-\mu_b)\|\sqrt{b}w_n'\|_{L^2(-1,0)}^2 = o(1).
\end{equation}
Due to $0\leq\mu_b<1$, one can directly deduce from \eqref{lemma-w'} that
$$\|y_n\|_{L^2(-1,0)}^2=o(1), \;\;
\|\sqrt{b}w_n'\|_{L^2(-1,0)}^2 = o(1),$$
which is a contradiction to $\|U_n\|_{\mathcal H_1} = 1$.

Next, we shall show that $i\mathbb R\subset\rho(\mathcal A_2)$ by the same method. In this case, \eqref{lemma-c} holds in $V_b^1(-1,0)$. Repeating the above arguments yields that \eqref{lemma-w'} remains true. Then, multiplying \eqref{lemma-d} by $w_n$ and integrating over $(-1,0)$, we obtain
\begin{equation}
	\|\sqrt{b}w_n'\|^2 - \|y_n\|^2 + b(0)w_n(0)w_n'(0)= o(1).
\end{equation}
By \eqref{lemma-a}, \eqref{lemma-v0}, \eqref{lemma-au'0} and the transmission conditions, it holds that
$$|b(0)w_n(0)w_n'(0)|\leq \left|\lim_{x\to0^+}a(x)u_n'(x)\right||u_n(0)|\leq \lambda_n^{-1}o(1).$$
Here $|u_n(0)|$ is bounded. Indeed,
\begin{equation}\label{lemma-u0}
	|u_n(0)|\leq |u_n(1)|+\int_0^1|u_n'| dx\leq |u_n(1)| +
	C\|\sqrt{a}u_n'\|_{L^2(0,1)} \leq o(1).
\end{equation}
Combining with \eqref{lemma-w'} and \eqref{lemma-u0}, it implies that
$$(2-\mu_b)\|\sqrt{b}w_n'\|_{L^2(-1,0)}^2 = o(1). $$
Then, $\|U_n\|_{\mathcal H_2} = o(1)$, which leads to a contradiction.
\end{proof}
\begin{lemma}\label{Tomilov}
\cite{Tomilov} Assume that $\mathcal A$ generates a $C_0$-semigroup $e^{\mathcal At}$ of contraction on a Hilbert space $\mathcal H$ and satisfies $i\mathbb R \subset\rho(\mathcal A)$. Then, $e^{\mathcal At}$ is polynomially stable with order $\theta$, if and only if
\begin{equation}
	\limsup_{|\omega|\rightarrow\infty}
	\omega^{-{1\over \theta}} \|(i\omega-\mathcal A)^{-1}\|_{\mathcal L(\mathcal H)} < \infty
\end{equation}
\end{lemma}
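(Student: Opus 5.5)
This is the Borichev--Tomilov theorem, and the paper only cites it, so I would not reprove it in full here. Still, here is the route I would take. Throughout, ``polynomially stable with order $\theta$'' is read as $\|e^{\mathcal At}\mathcal A^{-1}\|_{\mathcal L(\mathcal H)}\leq Ct^{-\theta}$ for $t\geq1$; this makes sense because $0\in\rho(\mathcal A)$. Write $R(\lambda)=(\lambda-\mathcal A)^{-1}$ and $T(t)=e^{\mathcal At}$.

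\emph{Necessity.} This direction is elementary and I would do it directly. For $\tau>0$ and $x\in\mathcal H$ use the finite-time Laplace identity
$$R(i\omega)x=\int_0^\tau e^{-i\omega t}T(t)x\,dt+e^{-i\omega\tau}R(i\omega)T(\tau)x.$$
Contractivity bounds the first term by $\tau\|x\|$. For the second term write
$$R(i\omega)T(\tau)=R(i\omega)\mathcal A\,\mathcal A^{-1}T(\tau),\qquad R(i\omega)\mathcal A=i\omega R(i\omega)-I.$$
The decay hypothesis then gives $\|R(i\omega)T(\tau)\|\leq(|\omega|\|R(i\omega)\|+1)C\tau^{-\theta}$. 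Choose $\tau=(2C|\omega|)^{1/\theta}$. The $R$-term on the right is absorbed into the left-hand side, and we get $\|R(i\omega)\|\leq C'|\omega|^{1/\theta}$ for large $|\omega|$.

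\emph{Sufficiency.} Set $\alpha=1/\theta$ and assume $\|R(i\omega)\|\leq C(1+|\omega|)^{\alpha}$.
\begin{itemize}
\item Step 1: extend the bound off the axis. A Neumann series shows that $R(\lambda)$ exists and satisfies $\|R(\lambda)\|\leq 2C(1+|\Im\lambda|)^\alpha$ on the region $|\Re\lambda|\leq (2C)^{-1}(1+|\Im\lambda|)^{-\alpha}$.
\item Step 2: gain a uniform bound after smoothing. Using the resolvent identity together with the moment inequality for fractional powers of $-\mathcal A$, deduce that $\sup_{\omega\in\mathbb R}\|R(i\omega)\mathcal A^{-\alpha}\|<\infty$. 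For integer $\alpha$ this follows directly from $R(i\omega)\mathcal A^{-1}=(i\omega)^{-1}(R(i\omega)+\mathcal A^{-1})$; the general case follows by interpolation.
\item Step 3: pass to decay by Plancherel. Work with $R(\lambda)\mathcal A^{-\alpha}$ on the line $\Re\lambda=\varepsilon$, which is the Laplace transform of $T(t)\mathcal A^{-\alpha}x$. Apply Plancherel once for $\mathcal A$ and once for $\mathcal A^*$, and represent $tT(t)\mathcal A^{-\alpha}x$ as a pairing of the two square-integrable orbits. This yields $\|T(t)\mathcal A^{-\alpha}\|=O(t^{-1})$.
\item Step 4: finish with the moment inequality, which converts the bound of Step 3 into $\|T(t)\mathcal A^{-1}\|=O(t^{-1/\alpha})=O(t^{-\theta})$.
\end{itemize}

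The main obstacle is Step 3. A naive contour-shift argument loses a logarithm, giving only $O((\ln t/t)^{\theta})$. Removing the logarithm requires the Hilbert space structure, through Plancherel applied to both $T$ and $T^*$ (the Gearhart--Pr\"uss mechanism). This is exactly where I would follow Borichev--Tomilov \cite{Tomilov}.
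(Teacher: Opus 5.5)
Your necessity argument is correct, and it goes beyond the paper, which only cites \cite{Tomilov}. The paper uses only the sufficiency direction, in the form $\|(i\lambda-\mathcal A_1)^{-1}\|\le C|\lambda|^{\delta}$. Steps 2 and 4 are fine. In Step 4, if $\alpha<1$ you first need $T(t)\mathcal A^{-n\alpha}=(T(t/n)\mathcal A^{-\alpha})^n$ with $n\alpha\ge 1$; in the paper $\alpha=\delta\ge1$, so this does not arise.

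\textbf{The gap is Step 3.} The pairing you describe does not give $\|T(t)\mathcal A^{-\alpha}\|=O(t^{-1})$.
\begin{itemize}
\item Step 2 plus Plancherel does give a uniform bound $\int_0^\infty\|T(s)\mathcal A^{-\alpha}x\|^2ds\le C\|x\|^2$.
\item The partner orbit $T(\cdot)^*y$ is not uniformly square integrable. Since $\|T(t)^*\|=1$ when the semigroup is not exponentially stable, $\sup_{\|y\|=1}\int_0^t\|T(s)^*y\|^2ds$ is of order $t$.
\item So pairing in $tT(t)\mathcal A^{-\alpha}=\int_0^tT(t-s)T(s)\mathcal A^{-\alpha}\,ds$ gives only $\|T(t)\mathcal A^{-\alpha}\|=O(t^{-1/2})$.
\item Smoothing both orbits gives $\|T(t)\mathcal A^{-2\alpha}\|=O(t^{-1})$. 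After Step 4 this is the rate $t^{-\theta/2}$, half the claimed rate.
\item Splitting $\mathcal A^{-\alpha}=\mathcal A^{-\alpha/2}\mathcal A^{-\alpha/2}$ does not help. The norm $\|R(i\omega)\mathcal A^{-\alpha/2}\|$ still grows like $|\omega|^{\alpha/2}$, so Plancherel gives no uniform $L^2$ bound on $T(\cdot)\mathcal A^{-\alpha/2}x$.
\end{itemize}
So the logarithm is not the only obstruction; the naive Hilbert-space pairing also loses a factor $2$ in the exponent.

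\textbf{The missing idea.} Put the weight $t$ on a single orbit, and let the bounded operator $R(\lambda)\mathcal A^{-\alpha}$ absorb one of the two resolvents in its Laplace transform.

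First extend Step 2 off the axis. The identity $R(\varepsilon+is)=R(is)\bigl(I-\varepsilon R(\varepsilon+is)\bigr)$ together with $\|\varepsilon R(\varepsilon+is)\|\le 1$ gives $M:=\sup_{\Re\lambda>0}\|R(\lambda)\mathcal A^{-\alpha}\|<\infty$.

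The Laplace transform of $t\mapsto tT(t)\mathcal A^{-\alpha}x$ is $R(\lambda)^2\mathcal A^{-\alpha}x=R(\lambda)\mathcal A^{-\alpha}\,R(\lambda)x$. Apply Plancherel on $\Re\lambda=\varepsilon$ to this function and to $T(\cdot)x$; no adjoint is needed:
\[
\int_0^\infty t^2e^{-2\varepsilon t}\|T(t)\mathcal A^{-\alpha}x\|^2\,dt\le\frac{M^2}{2\pi}\int_{\mathbb R}\|R(\varepsilon+is)x\|^2\,ds=M^2\int_0^\infty e^{-2\varepsilon t}\|T(t)x\|^2\,dt\le\frac{M^2}{2\varepsilon}\|x\|^2 .
\]
Now take $\varepsilon=1/t$. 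Use contractivity, $\|T(t)\mathcal A^{-\alpha}x\|\le\|T(s)\mathcal A^{-\alpha}x\|$ for $s\in[t/2,t]$. This gives $\frac{7}{24e^{2}}\,t^3\|T(t)\mathcal A^{-\alpha}x\|^2\le\frac{M^2}{2}\,t\|x\|^2$, that is, $\|T(t)\mathcal A^{-\alpha}\|=O(t^{-1})$.

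If you want a pairing, it has to be $\frac{t^2}{2}T(t)=\int_0^t sT(s)T(t-s)\,ds$. There the weighted orbit $sT(s)\mathcal A^{-\alpha}x$, whose $L^2(0,t)$ norm is $O(\sqrt t)$ by the estimate above, is paired with $T(t-s)^*y$, whose $L^2(0,t)$ norm is trivially $O(\sqrt t)$.

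With this replacement for Step 3, your sufficiency argument goes through.
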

\section{The proof of main result}
In this section, we shall use the frequency domain method to prove that the system \eqref{sys} is polynomially stable, which depends only on $\mu_a$.

\begin{theorem}
There is a constant $C>0$ such that
\begin{equation}
	\|T_i(t)U_0\|_{\mathcal H}\leq t^{-{2-\mu_a\over 2}}\|U_0\|_{\mathcal D(\mathcal A_i)},\;\;\mbox{for each }U_0\in\mathcal D(\mathcal A_i),\;i=1,2.
\end{equation}
\end{theorem}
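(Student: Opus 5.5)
By Lemma \ref{Tomilov} with $\theta = \frac{2-\mu_a}{2}$, so that $1/\theta = \frac{2}{2-\mu_a}$, it suffices to prove the resolvent estimate
\begin{equation*}
\limsup_{|\lambda|\to\infty}|\lambda|^{-\frac{2}{2-\mu_a}}\|(i\lambda-\mathcal A_i)^{-1}\|<\infty .
\end{equation*}
The earlier lemma already gives $i\mathbb R\subset\rho(\mathcal A_i)$. We argue by contradiction. Suppose there are $\lambda_n\to\infty$ and $U_n=(u_n,v_n,w_n,y_n)\in\mathcal D(\mathcal A_i)$ with $\|U_n\|_{\mathcal H}=1$ and
\begin{equation*}
\lambda_n^{\ell}(i\lambda_n-\mathcal A_i)U_n=o(1),\qquad \ell=\frac{2}{2-\mu_a}.
\end{equation*}
Componentwise this is the system \eqref{lemma-a}--\eqref{lemma-d} with right-hand sides $f_{jn},g_{jn}$ of size $\lambda_n^{-\ell}o(1)$. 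Dissipativity gives $|v_n(1)|^2=\lambda_n^{-\ell}o(1)$. The first equation and the boundary condition then give $|u_n(1)|=\lambda_n^{-1-\ell/2}o(1)$ and $|u_n'(1)|=\lambda_n^{-\ell/2}o(1)$.

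The core of the argument is a multiplier identity on the degenerate string $(0,1)$. Multiply the $u$-equation by $2x\bar u_n'$ (equivalently, by $x a^{-1}\overline{(au_n')}$) and integrate. The boundary terms at $x=0$ vanish by Lemma \ref{lemma1}(3),(4), as in the proof of $i\mathbb R\subset\rho$. This yields
\begin{equation*}
\|v_n\|^2+(1-\mu_a)\|\sqrt a u_n'\|^2 = |v_n(1)|^2+a(1)|u_n'(1)|^2+\text{error terms}.
\end{equation*}
The delicate part is the error terms. They have the form $\lambda_n\langle f_{1n}', x\bar u_n\rangle$-type quantities, which are $\lambda_n^{1-\ell}o(1)\cdot\|x u_n'\|$ and hence need not be small when $\ell<2$. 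I expect this to be the main obstacle. Since $\ell=2/(2-\mu_a)$, the exponent must enter through the degeneracy at $x=0$.

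My plan for that step is to split $(0,1)$ into $(0,\varepsilon_n)$ and $(\varepsilon_n,1)$ with $\varepsilon_n=\lambda_n^{-\frac{2}{2-\mu_a}}$. This is the length scale at which the local wavelength $\sqrt{a(x)}/\lambda$, namely $x^{\mu_a/2}/\lambda$, matches $x$. On $(0,\varepsilon_n)$, use the weighted Hardy/Poincaré inequalities of Lemma \ref{lemma1} together with $a^{-1}\in L^1$. Near $0$ one has $|u_n(x)-u_n(0)|\le C x^{(1-\mu_a)/2}\|\sqrt a u_n'\|$, and the velocity on this small interval is controlled similarly. On $(\varepsilon_n,1)$ the equation is uniformly elliptic with constants that are powers of $\varepsilon_n$, so the multiplier handles the errors there. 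Balancing the two pieces, the choice of $\varepsilon_n$ should make every error $o(1)$. The result is $\|v_n\|_{L^2(0,1)}+\|\sqrt a u_n'\|_{L^2(0,1)}=o(1)$ and $\gamma a(1)|u_n(1)|^2=o(1)$.

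Next I transfer smallness through the junction. As in \eqref{lemma-v0} and \eqref{lemma-au'0}, but now tracking powers of $\lambda_n$, I will bound $|y_n(0)|=|v_n(0)|$ and $|(bw_n')(0)|=|\lim_{x\to0^+}(au_n')|$. The natural route is to use the pointwise identity obtained from the $x$-multiplier on $(0,x)$, or to evaluate the multiplier identity at the endpoint, so that the terms $|v_n(0)|^2+a|u_n'|^2$ at $0$ are controlled by the interior quantities. I will aim for $|v_n(0)|+|(au_n')(0^+)|=o(1)$.

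Finally, the multiplier $2(1+x)\bar w_n'$ on $(-1,0)$ gives an identity like \eqref{lemma-w}. For $\mathcal A_1$ ($\mu_b<1$) this alone yields $\|y_n\|+\|\sqrt b w_n'\|=o(1)$. For $\mathcal A_2$ ($1\le\mu_b<2$), I add the multiplier $\bar w_n$ as in the proof of the previous lemma to obtain $(2-\mu_b)\|\sqrt b w_n'\|^2=o(1)$. In either case $\|U_n\|_{\mathcal H}=o(1)$, a contradiction, and Lemma \ref{Tomilov} gives the decay rate $t^{-\frac{2-\mu_a}{2}}$.
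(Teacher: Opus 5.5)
There is a genuine gap, and it is not where you put it. The error terms in the $2x\bar u_n'$ multiplier on $(0,1)$ are harmless. Substitute $i\lambda_n u_n'=v_n'+\lambda_n^{-\ell}f_{1n}'$, with $f_{1n}=o(1)$ in $V_a^1(0,1)$, into $\Re\int_0^1 i\lambda_n v_n\,2x\bar u_n'\,dx$. The remainder is bounded by $2\lambda_n^{-\ell}\|v_n\|\,\|\sqrt a f_{1n}'\|$, because $x^2\le a(x)$ on $(0,1)$. The forcing term is bounded by $\lambda_n^{-\ell}o(1)\|\sqrt a u_n'\|$. Even your own bound $\lambda_n^{1-\ell}o(1)$ would already be $o(1)$, since $\ell\ge 1$. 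So no splitting of $(0,1)$ is needed. The identity gives directly the \emph{rate} $\|v_n\|,\ \|\sqrt a u_n'\|=\lambda_n^{-\ell/2}o(1)$, as in \eqref{r}. You record only $o(1)$, and that discards exactly what the rest of the proof needs.

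The real difficulty is the junction: proving $v_n(0)=o(1)$ and $(au_n')(0^+)=o(1)$. Your plan gives no working argument for this step. The weight $x$ vanishes at $0$; that is precisely why Lemma \ref{lemma1}(3),(4) kill the boundary terms there. So evaluating the multiplier identity at the endpoint gives no control of $|v_n(0)|^2+|(au_n')(0^+)|^2$. Tracking powers globally, as in \eqref{lemma-v0}, gives only $|v_n(0)|\le C\lambda_n\|\sqrt a u_n'\|+|v_n(1)|=\lambda_n^{1-\ell/2}o(1)$. The same holds for $(au_n')(0^+)$, and neither is small because $\ell<2$.

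The missing idea is to localize at the scale $\xi_n=\lambda_n^{-\ell}$. Your $\varepsilon_n$ is the right scale, but it belongs here.
\begin{itemize}
\item Pick $\eta_n,\zeta_n\in[\xi_n,2\xi_n]$ minimizing $|v_n|$ and $|au_n'|$ there. Then $|v_n(\eta_n)|\le\xi_n^{-1/2}\|v_n\|$ and $|(au_n')(\zeta_n)|\le C\xi_n^{(\mu_a-1)/2}\|\sqrt a u_n'\|$.
\item Integrate $v_n'=i\lambda_n u_n'-\lambda_n^{-\ell}f_{1n}'$ over $(0,\eta_n)$, using $\int_0^{\eta_n}a^{-1}\,dx\le C\eta_n^{1-\mu_a}$.
\item Integrate $(au_n')'=i\lambda_n v_n+\dots$ over $(0,\zeta_n)$.
\end{itemize}
With the rate $\lambda_n^{-\ell/2}$, the four contributions are $o(1)$ times $\lambda_n^{0}$, $\lambda_n^{1-\ell(2-\mu_a)/2}$, $\lambda_n^{-\ell\mu_a/2}$ and $\lambda_n^{1-\ell}$ respectively. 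The second is $o(1)$ precisely because $\ell(2-\mu_a)=2$. This balance, not the multiplier on $(0,1)$, is where the exponent of the theorem comes from.

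Your pointwise identity from the $x$-multiplier on $(0,s)$ could replace the choice of $\eta_n,\zeta_n$, since it bounds $s|v_n(s)|^2+s\,a(s)|u_n'(s)|^2$. You would still need the passage from $s\sim\xi_n$ to $0$ described above, and the rate. Once $y_n(0)=v_n(0)=o(1)$ and $(bw_n')(0)=(au_n')(0^+)=o(1)$ are established, your treatment of $(-1,0)$ for both $\mathcal A_1$ and $\mathcal A_2$ goes through.
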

\begin{proof}
For clarity, we only provide the detailed argument  for the case $i=1$. The case $i=2$ follows analogously. By lemma \ref{Tomilov}, it is sufficient to show that there exists a constant $r>0$ such that
\begin{equation}\label{main}
	\inf_{\|U\|_{\mathcal H}=1,\;\lambda\in\mathbb R}\|\lambda_n^{\delta}(i\lambda_n - \mathcal A_1)U_n\| \geq r,\qquad\forall\;U\in\mathcal D(\mathcal A_1),
\end{equation}
where $\delta = {2\over 2-\mu_a}$. By contradiction, we suppose that \eqref{main} is not true. Then, there exists a sequence $\{\lambda_n, U_n\}\subset\mathbb R \times \mathcal D(\mathcal A_1)$ with $\|U_n\|_{\mathcal H_1} = 1$ and $\lambda_n\rightarrow \infty$, such that
\begin{equation}
	\|\lambda_n^{\delta}(i\lambda_n - \mathcal A_1)U_n\| = o(1),
\end{equation}
which is equivalent to, as $n\rightarrow\infty$,
\begin{subequations}
	\begin{align}
		&\lambda_n^{\delta}(i\lambda_n u_n - v_n) = o(1),\;\;&&\mbox{in}\;V_a^1(0,1),\label{a}\\
		&\lambda_n^{\delta}(i\lambda_n v_n - (au_n')')= o(1),\;\;&&\mbox{in}\;L^2(0,1),\label{b}\\
		&\lambda_n^{\delta}(i\lambda_n w_n - y_n) = o(1),\;\;&&\mbox{in}\;W_b^1(-1,0),\label{c}\\
		&\lambda_n^{\delta}(i\lambda_n y_n - (bw_n')') = o(1),\;\;&&\mbox{in}\;L^2(-1,0).\label{d}
	\end{align}
\end{subequations}
By dissipativeness of the operator $\mathcal A_1$, one has
$$\Re\langle (i\lambda_n - \mathcal A_1)U_n , U_n \rangle = -a(1)|v_n(1)|^2 = \lambda_n^{-\delta}o(1).$$
Combining with \eqref{a} and the boundary condition, it holds that
\begin{equation}\label{bound-o(1)}
	u_n(1) = \lambda_n^{-1-{\delta\over 2}}o(1),\;\;
	\left|u_n'(1)\right| \leq \gamma|u_n(1)| + |v_n(1)| = \lambda_n^{-{\delta\over2}}o(1).
\end{equation}
In what follows, we shall prove that
$$\|v_n\|_{L^2(0,1)},\;
\|\sqrt{a}u_n'\|_{L^2(0,1)},\;
\|y_n\|_{L^2(-1,0)},\;
\|\sqrt{b}w_n'\|_{L^2(-1,0)}
=o(1),$$
which contradicts $\|U_n\|_{\mathcal H_1} = 1$.

Taking $L^2(0,1)$ inner product of \eqref{b} with $2xu_n'$, then using lemma \ref{lemma1}, one has
\begin{equation*}
	\begin{gathered}
		\|v_n\|_{L^2(0,1)}^2 + (1-\mu_a)\int_{0}^1 a|u_n'|^2 dx - |v_n(1)|^2
		-a(1)|u_n'(1)|^2
		=\lambda_n^{-\delta}o(1).
	\end{gathered}
\end{equation*}
From $0\leq \mu_a<1$ and \eqref{bound-o(1)}, it holds that
\begin{equation}\label{r}
	\|v_n\|_{L^2(0,1)} = \lambda_n^{-{\delta\over 2}}o(1),\;\;
	\|\sqrt{a}u_n'\|_{L^2(0,1)} = \lambda_n^{-{\delta\over2}}o(1).
\end{equation}
Multiplying \eqref{b} and \eqref{d} by $u_n$ and $w_n$ separately, adding the results yields
\begin{equation}\label{add}
	\begin{gathered}
		\|\sqrt{a}u_n'\|_{L^2(0,1)}^2+\|\sqrt{b}w_n'\|_{L^2(-1,0)}^2
		+ \lim_{x\to0^+} a(x)u_n'(x)u_n(x)+ \lim_{x\to-1^+}b(x)w_n'(x)w_n(x)\\
		- \|v_n\|_{L^2(0,1)}^2-\|y_n\|_{L^2(-1,0)}^2
		- a(1)u_n'(1)u_n(1)- b(0)w_n'(0)w_n(0)= \lambda_n^{-\delta} o(1).
	\end{gathered}
\end{equation}
Then, it follows from transmission condition, the boundary conditions and \eqref{bound-o(1)} that
\begin{equation}\label{0}
	\begin{gathered}
		\lim_{x\to0^+} a(x)u_n'(x)u_n(x)+\lim_{x\to-1^+}b(x)w_n'(x)w_n(x)
		- a(1)u_n'(1)u_n(1)- b(0)w_n'(0)w_n(0)
		=\lambda_n^{-\delta}o(1).
	\end{gathered}
\end{equation}
Thus, substituting \eqref{r} and \eqref{0} into \eqref{add}, one has
\begin{equation}\label{left}
	\|\sqrt{b}w_n'\|_{L^2(-1,0)}^2
	-\|y_n\|_{L^2(-1,0)}^2
	=\lambda_n^{-\delta}o(1).
\end{equation}
Taking $L^2(-1,0)$ inner product of \eqref{c} with $2(x+1)w_n'$, and using lemma \ref{lemma1} and \eqref{left}, we obtain
\begin{equation}\label{right}
	(2-\mu_b)\|y_n\|^2_{L^2(-1,0)}
	-|y_n(0)|^2 - b(0)|w_n'(0)|^2 = \lambda_n^{-\delta}o(1).
\end{equation}

We claim that $y_n(0) = o(1)$ and $b(0)|w_n'(0)|^2= o(1)$. Indeed, it holds that
\begin{equation}\label{trans0}
	y_n(0) = v_n(0),\;\;\;b(0) |w_n'(0)|^2  = \left|\lim_{x\to 0^+}a(0)u_n'(0)\right|^2.
\end{equation}
Thus, it is sufficient to prove that
\begin{equation*}
	\lim_{x\to 0^+}a(x)u_n'(x) = o(1),\;\; v_n(0) = o(1).
\end{equation*}

Let $\xi_n = \lambda_n^{-\beta}$, where $\beta = {2\over2-\mu_a}>0$ is a positive number. Then,  there exists a sequence $\{\zeta_n\}$ satisfying $\xi_n\leq \zeta_n \leq 2\xi_n$ such that
\begin{equation}
	|(au')(\zeta_n)| =  \min_{\xi_n\leq x\leq 2\xi_n}|(au_n')(x)|\leq \xi_n^{-1}\int_{ \xi_n}^{2\xi_n} (au_n')(x)dx.
\end{equation}
where, by Holder's inequality,
\begin{equation}
	\begin{split}
		\int_{\xi_n}^{2\xi_n} (au_n')(x)dx
		\leq\left(\int_{\xi_n}^{2\xi_n} a  dx\right)^{1\over2}
		\left(\int_0^1 a|u_n'|^2dx\right)^{1\over2}
		\leq C\xi_n^{1+\mu_a\over2}\|\sqrt{a}u_n'\|_{L^2(0,1)}.
	\end{split}
\end{equation}
Due to \eqref{r}, it is easy to obtain that
\begin{equation}
	|(au_n')(\zeta_n)|\leq \lambda_n^{{\beta(1-\mu_a)\over2}-{\delta\over2}}o(1).
\end{equation}
Thus, it follows from \eqref{b} that
\begin{equation}\label{trans1}
	\begin{split}
		\left|\lim_{x\to 0^+}a(x)u_n'(x)\right|
		\leq &|(au_n')(\zeta_n)|+|\lambda_n|\left|\int_{0}^{\zeta_n} v_n dx\right|\\
		\leq & |(au_n')(\zeta_n)|+ \zeta_n^{1\over2}\lambda_n\|v_n\|_{L^2(0,1)}\\
		\leq & \lambda_n^{{\beta(1-\mu_a)\over2}-{\delta\over2}}o(1)
		+\lambda_n^{1-{\beta\over2}-{\delta\over2}}o(1).
	\end{split}
\end{equation}
On the other hand, there exists a sequence $\{\eta_n\}$ satisfying $\xi_n\leq \eta_n\leq 2\xi_n$ such that
\begin{equation}
	\begin{split}
		|v_n(\eta_n)| = \min_{\xi_n\leq x\leq 2\xi_n}|v_n(x)|
		\leq \xi_n^{-{1\over2}}\|v_n\|_{L^2(0,1)}
		=\lambda_n^{{\beta\over2}-{\delta\over2}}o(1).
	\end{split}
\end{equation}
Notice that $a^{-1}\in L^1(0,1)$ since $0\leq \mu_a <1$, one has
\begin{equation}\label{trans2}
	\begin{split}
		|v_n(0)|
		&\leq \left|\int_{0}^{\eta_n} v_n'dx\right| + |v_n(\eta_n)|\\
		&\leq |\lambda_n|\left(\int_{0}^{\eta_n}{1\over a(x)}dx\right)^{1\over2}
		\left(\int_{0}^{\eta_n}
		a(x)|u_n'|^2dx\right)^{1\over2}
		+|v_n(\eta_n)|\\
		&\leq \lambda_n^{1-{\beta(1-\mu_a)\over2}-{\delta\over2}}o(1)
		+\lambda_n^{{\beta\over2}-{\delta\over2}}o(1).
	\end{split}
\end{equation}
Substituting \eqref{trans1} and \eqref{trans2} into \eqref{right}, it follows from $\beta= \delta = {2\over2-\mu_a}$ that
\begin{equation}\label{left-0}
	(2-\mu_b)\|y_n\|^2_{L^2(-1,0)}\leq o(1).
\end{equation}
Then, it is easy to check that $\|U_n\|_{\mathcal H_1} =o(1)$ by \eqref{r} and \eqref{left-0}.

For the case $i=2$, \eqref{c} holds in the space$V_b^1(-1,0)$. Repeating the above arguments, we can conclude the same result. The proof is completed.
\end{proof}
\begin{remark}
It is obvious that the stability of the system depends only on the degeneracy of the right string and is independent of the degeneracy of the left string. On the other hand, we can only obtain the stability of the system in the case of  $0\leq\mu_a<1$. Mathematically, this is determined by the fact that $a^{-1}\in L^1$ is valid only when $0\leq\mu_a<1$. In practice, when the degeneracy is too strong, i.e., $1\leq\mu_a<2$, the defect at the connection point prevents the boundary control from passing through this point to control the left string.
\end{remark}

\section*{Acknowledgments}
The project is supported by the National Natural Science Foundation of China (grants No. 12271035, 12131008) and
Beijing Municipal Natural Science Foundation (grant No. 1232018).

%


Y.N. Sun, School of Mathematics and Statistics, Beijing Institute of Technology, Beijing, 100081, P.R. China

Email address: yanansun@bit.edu.cn

Q. Zhang, 
School of Mathematics and Statistics, Beijing Institute of Technology, Beijing, 100081, P.R. China

Email address: zhangqiong@bit.edu.cn

\end{document}

%% file: pagefmt.tex
\catcode`@=11
\long\def\@makefntext#1{\noindent #1}
\newskip\tabcentering\tabcentering=1000pt plus 1000pt minus 1000pt

\def\MCH#1#2{\setbox0=\hbox{\raise#1\hbox{#2}}\smash{\box0}}

\def\evenhead{}
\def\oddhead{}
\def\@evenhead{\hbox to\textwidth{\footnotesize\rm\thepage\hfill{\small\it\evenhead}}}
\def\@oddhead{\hbox to\textwidth{\footnotesize{\small\it\oddhead}\hfill\footnotesize\rm\thepage}}
\def\@evenfoot{}
\def\@oddfoot{}

\catcode`@=12
\font\bb=cmbx8%
\font\small=cmr8

\font\st=cmbx10 scaled\magstep2%
\def\title#1{{\noindent\st#1}}%
\def\author#1{\vspace*{0.3 true cm}{\noindent\bf #1}}%
\def\abstract#1{\vspace*{1 true cm}{\noindent{\footnotesize{\bb Abstract}\hspace{4true mm}#1}}}%
\def\keywords#1{\vspace{5mm}\noindent{\footnotesize{\bb Keywords}\hspace{4true mm}#1}}%
\def\mr#1{\noindent{\footnotesize{\bf 2020 MR Subject Classification}\hspace{4mm}#1}}%